\newtheorem{thm}{Theorem}[section]
\newtheorem{cor}[thm]{Corollary}
\newtheorem{prop}[thm]{Proposition}  
\theoremstyle{remark}
\theoremstyle{definition}
\newtheorem{def/prop}[thm]{Definition/Proposition}
\numberwithin{equation}{section}
\newcommand{\rs}[1]{} \newcommand{\rma}[1]{}
\def\Im{\mathop{\mathrm{Im}}\nolimits}
\def\Gal{\mathop{\mathrm{Gal}}\nolimits}
\def\Spec{\mathop{\mathrm{Spec}}\nolimits}
\def\Cl{\mathop{\mathrm{Cl}}\nolimits}
\newcommand{\mf}[1]{{\mathfrak{#1}}}
\newcommand{\bb}[1]{{\mathbb{#1}}}
\newcommand{\mca}[1]{{\mathcal{#1}}}
\newcommand{\To}{\longrightarrow}
\newcommand{\inj}{\hookrightarrow}
\newcommand{\surj}{\twoheadrightarrow}
\newcommand{\congto}{\overset{\cong}{\to}}
\newcommand{\imp}{\Longrightarrow}
\newcommand{\Z}{\bb{Z}}
\newcommand{\Q}{\bb{Q}}
\newcommand{\C}{\bb{C}}
\newcommand{\F}{\bb{F}}
\newcommand{\p}{\mf{p}}
\newcommand{\ol}{\overline}
\newcommand{\ds}{\displaystyle}
\newcommand{\wt}[1]{{\widetilde{#1}}}
\newcommand{\wh}[1]{{\widehat{#1}}}
\DeclareMathOperator*{\restprod}%
 {\mathchoice{\ooalign{\ensuremath{\displaystyle\prod}\crcr\ensuremath{\displaystyle\coprod}}}%
             {\ooalign{\ensuremath{\textstyle\prod}\crcr\ensuremath{\textstyle\coprod}}}%
             {\ooalign{\ensuremath{\scriptstyle\prod}\crcr\ensuremath{\scriptstyle\coprod}}}%
             {\ooalign{\ensuremath{\scriptscriptstyle\prod}\crcr\ensuremath{\scriptscriptstyle\coprod}}}%
 }
\newcommand{\pmx}[1]{\begin{pmatrix}#1\end{pmatrix}}
\newcommand{\spmx}[1]{{\small \pmx{#1}}}
\title[A Hilbert reciprocity law on 3-manifolds]
{\ \\[-1cm] A Hilbert reciprocity law on 3-manifolds}
\author{Hirofumi Niibo}
\email{niibo.hirofumi@gmail.com}
\address{Supership Inc.; Toranomon-hills 27F, 1-17-1 Toranomon, Minato-ku, 105-6427, Tokyo, Japan} 
\author{Jun Ueki}
\email{uekijun46@gmail.com}
\address{Department of Mathematics, Faculty of Science, Ochanomizu University\\ 
2-1-1 Otsuka, Bunkyo-ku, 112-8610, Tokyo, Japan}
\subjclass[2020]{37D99, 57K99, 57M99; 11R37} 
\keywords{knot, 3-manifold, idelic class field theory, Hilbert symbol, arithmetic topology} 
\begin{document}

\begin{abstract} 
Based on our homological idelic class field theory, we formulate an analogue of the Hilbert reciprocity law on a rational homology 3-sphere endowed with an infinite link, in the spirit of arithmetic topology; 
We regard the intersection form on the unitary normal bundle of each knot as an analogue of the Hilbert symbol at each prime ideal to formulate the Hilbert reciprocity law, ensuring that cyclic covers of links are analogues of Kummer extensions. 
\end{abstract}

\maketitle 

{\small 
\tableofcontents 
}

\section{Introduction} 
In this article, we formulate an analogue of \emph{the Hilbert reciprocity law} 
in a view of homological idelic class field theory for 3-manifolds \cite{Niibo1, NiiboUeki}, 
that may be compatible or comparable with Mihara's cohomological one \cite{Mihara2019Canada} and Morishita et.al.'s one for foliated dynamical systems \cite{KimMorishitaNodaTerashima2021}.

We mainly work on a rational homology 3-sphere $M$ ($\Q$HS$^3$) 
endowed with a link $\mca{K}$ with countably infinite tame components. 
We mostly only assume that components of $\mca{K}$ generate $H_1(M)=H_1(M,\Z)$, nevertheless 
we verified in \cite{Ueki7} the following implications amongst nice conditions on $\mca{K}$; 
Chebotarev \cite{Mazur2012, McMullen2013CM} $\imp$ stably generic \cite{Mihara2019Canada} $\imp$ very admissible \cite{Niibo1, NiiboUeki}. 
By virtue of McMullen's result, we have various such $\mca{K}$'s with their own interest \cite{Ueki9}. 

We follow Neukirch's description in \cite[Chapter VI, Corollary 5.7, Theorem 8.1]{Neukirch}. 
Namely, we first formulate the product formula of norm residue symbols. 
Secondary, regarding the intersection form on each torus as an analogue of the Hilbert symbol, we prove an analogue of the Hilbert reciprocity law.
Finally, we verify that in our context the analogues of Kummer extensions are nothing but cyclic covers of links, ensuring Hirano's argument on arithmetic Dijkgraaf--Witten invariants in \cite[Appendix]{Hirano1-arXiv}. 

\section{M$^2$KR dictionary} 
Let us first exhibit the dictionary of basic analogies between prime numbers and knots (cf.\cite{Morishita2012}); 
{
\begin{center}
\begin{tabular}{|c||c|}
\hline 
Number theory&Low dimensional topology\\ 
\hline \hline
number field $k$ (the ring of integers $\Spec\mca{O}_k$) & connected oriented closed 3-manifold $M$\\
prime ideal $\p: \Spec \F_\p\inj \Spec \mca{O}_k$ & knot $K: S^1\inj M$\\
family of prime ideals $S=\{\p_1,...,\p_s\}$& link $L:
\sqcup S^1\inj M$\\
(ramified/unramified) extension $l/k$& (branched/unbranched) cover $h:N\to M$\\
%
\hline 
\'etale fundamental group $\pi_1^{\text{\'et}}(\Spec \mca{O}_k)$ & fundamental group $\pi_1(M)$\\
$\pi_1^{\text{\'et}}(\Spec \mca{O}_k-S)$  & $\pi_1(M-L)$\\
geometric point $x:\Spec \C\to \Spec \mca{O}_k$ & base point $b_m:\{{\rm pt}\} \inj M$\\ 
\hline 
ideal group $I_k$ & 1-cycle group $Z_1(M)$\\
$\partial: k^\times\to I_k; a\mapsto (a)$ & $\partial: C_2(M)\to Z_1(M); s\mapsto \partial s$\\
principal ideal group $P_k={\rm Im}\partial$ & 1-boundary group $B_1(M)={\rm Im}\partial$ \\ 
\hline 
ideal class group $\Cl(k)=I_k/P_k$ & 1st homology group $H_1(M)=Z_1(M)/B_1(M)$\\
(Fact: $\#\Cl(k)<\infty$)& (Assumption: $\#H_1(M)<\infty$)\\ 
\hline 
Artin reciprocity law: & Hurewicz isomorphism:\\
$\pi_1^{\text{\'et}}(\ol{\Spec \mca{O}_k})^{\rm ab} \cong \Gal(k^{\rm ur}_{\rm ab}/k) \cong \Cl(k) $ &$\pi_1(M)^{\rm ab} \cong \Gal(M_{\rm ab}/M) \cong H_1(M)$ \\
\hline 
Hilbert theory & Hilbert theory  \\ 
\hline 
\end{tabular} 
\end{center} 
Here, we put $\ol{\Spec \mca{O}_k}=\Spec \mca{O}_k\cup\{\text{infinite primes}\}$. 
The maximal abelian cover and the unramified abelian extension are denoted by $M^{\rm ab}\to M$ and $k^{\rm ab}_{\rm ur}/k$ respectively. 
The Hilbert theory means that on decompositions of prime ideals in Galois extensions and its analogue (cf.\cite[Section 5]{Morishita2012}, \cite[Section 2]{Ueki1}. 

Next, let us recall the local theories. Let $\p$ be a prime ideal of a number field $k$ and let $k_\p$, $\mca{O}_\p$, and $\F_\p$ denote the local field, local integer ring, and the residual field of $\p$ respectively. Let $V_K$ be a tubular neighborhood of a knot $K$ in a 3-manifold. 
Let $\simeq$ denote the (\'etale) homotopy equivalence. Then, we have the following. 

{
\begin{center}
\begin{tabular}{|c||c|} 
\hline 
$\Spec \mca{O}_\p \simeq \Spec \F_\p$ & $V_K\simeq K$\\
\hline 
$\Spec k_\p \simeq \Spec \mca{O}_\p- \Spec \F_\p$ & $\partial V_K\simeq V_K-K$\\ 
\hline 
$1\to \mca{O}_\p^{\times}\to k_\p^{\times}\overset{v_\p}{\to} \Z\to 0$ & $0\to H_2(V_K,\partial V_K) \overset{\partial_*}{\to} H_1(\partial V_K)\overset{v_K}{\to} \Z \to 0$ \\ 
\hline 
\end{tabular} 
\end{center}
}


Finally, we recollect the homological idelic class field theory due to the authors \cite{NiiboUeki}. 
Let $M$ be a connected, oriented, closed 3-manifold endowed with a link $\mca{K}$ consisting of countably infinite tame components and suppose that $\mca{K}$ is \emph{admissible}, that is, its components generate the group $H_1(M)$. 

To each knot $K$ contained in $\mca{K}$, we associate a torus by \emph{blow up}, that is, replacing $K$ by its unitary normal bundle $N^1_M(K)$. 
The result is naturally homeomorphic to the exterior of a tubular neighborhood $V_K$ of $K$. 
In what follows. we work under the identification $N^1_M(K)=\partial V_K$. 
In this manner, we may avoid with less effort the intersection of tubular neighborhoods of knots. 

The idele group of the pair $(M,\mca{K})$ is defined by 
\[\displaystyle \mca{J}_{M,\mca{K}}:=
\restprod_{K\subset \mca{K}}H_1(\partial V_K)=
\bigl\{(a_K)_K\in \prod_{K\subset \mca{K}}H_1(\partial V_K) \mid 
v_{K}(a_K)=0 {\rm \ for \ all \ but \ finite \ number \ of \ }K
\bigr\}\]
as a group, where $K$ runs through components of $\mca{K}$. 
For each $K$, define the local norm topology on $\mu_K={\rm Ker} v_K$ so that the set of subgroups of finite indices is a neighborhood basis of the identity and that on $H_1(\partial V_K)$ so that $H_1(\partial V_K)$ is a topological group and $\mu_K \inj H_1(\partial V_K)$ is an open and continuous map. Then $\mca{J}_{M,\mca{K}}$ is a topological group which is the restricted product with respect to the family $\{\mu_K\}_{K\subset \mca{K}}$ of open subgroups. 

Suppose that $L$ runs through finite sublinks of $\mca{K}$. Then we have natural isomorphisms $\ds H_1(M-\mca{K})\cong \varprojlim_{L\subset \mca{K}} H_1(M-L)$ and $H_2(M,\mca{K})$ $\ds \cong \varinjlim_{L\subset \mca{K}}H_2(M,L)$ 
yielding a natural surjective homomorphism $\wt{\rho}:\mca{J}_{M,\mca{K}}\to H_1(M-\mca{K})$ and a homomorphism $\Delta:H_2(M,\mca{K})\to \mca{J}_{M,\mca{K}}$ satisfying the key equality ${\rm Ker}\wt{\rho}={\rm Im}\Delta$. 
This $\wt{\rho}$ is compatible with the maps of local reciprocities and induces $\rho: \mca{C}_{M,\mca{K}}\to H_1(M-\mca{K})$. 

Now suppose in addition that $\mca{K}$ is \emph{very admissible}, that is, for any finite branched cover $h:N\to M$ branched along a finite sublink $L$ of $\mca{K}$, the group $H_1(N-h^{-1}(L))$ is generated by $h^{-1}(\mca{K})$. 
Then for each $h$, a natural isomorphism $\rho_h:\mca{C}_{M,\mca{K}}/h_*(\mca{C}_{N,h^{-1}(\mca{K})})\congto {\rm Gal}(h)^{\rm ab}$ is induced. This fact may be seen as analogue of the Artin reciprocity law. 

{
\begin{center} \begin{tabular}{|c||c|}
\hline 
idele group $\mca{J}_k$ & 
$\mca{J}_{M,\mca{K}}$\\ \hline 
diagonal embedding $\Delta:k^\times\to \mca{J}_k$ & $\Delta:H_2(M,\mca{K})\to \mca{J}_{M,\mca{K}}$ \\ \hline 
principal idele group $\mca{P}_k:=\Im \Delta$ & 
$\mca{P}_{M,\mca{K}}:=\Im \Delta$ \\ \hline 
idele class group $\mca{C}_k:=\mca{J}_k/\mca{P}_k$ & 
$\mca{C}_{M,\mca{K}}:=\mca{J}_{M,\mca{K}}/\mca{P}_{M,\mca{K}}$\\ 
\hline 
Artin reciprocity law & the global reciprocity law \\ \hline
\end{tabular} 
\end{center} 
}

A key to the global reciprocity law was the equality ${\rm Im \Delta}={\rm Ker}(\wt{\rho}:\mca{J}_{M,\mca{K}}\surj \mca{C}_{M,\mca{K}})$ asserted in \cite[Theorem 5.3]{NiiboUeki}. 
We may notice that at the stage to prove this equality, we only used that $\mca{K}$ is admissible, 
but not that $\mca{K}$ is very admissible.

\section{Number theory}
We recollect the Hilbert reciprocity law using of norm residue symbols, whose analogue will be discussed later. 
\subsection{The norm residue symbols} 

Let $l/k$ be a finite abelian extension of a number field. 
Then the inverse map of the Artin reciprocity map $\Gal(l/k)\congto \mca{C}_k/{\rm Nr}_{l/k} \mca{C}_l$ induces 
a surjective homomorphism $(\ , l/k):\mca{C}_k\surj \Gal(l/k)$ called the global norm residue symbol. 

Let $\p$ be a non-zero prime ideal of $\mca{O}_k$ and put $l_\p=k_\p l$. Then the inverse map of the local reciprocity map $\Gal(l_\p/k_\p)\congto k_\p^*/{\rm Nr}_{l_\p/k_\p} l_\p^*$ induces a surjective homomorphism $(\ ,l_\p/k_\p):k_\p^*\surj \Gal(l_\p/k_\p)$ called the local norm residue symbol.

\begin{prop} {\rm (cf.\cite[Chapter VI, Proposition 5.6]{Neukirch})} \label{prop.normresidue} 
Suppose that $l/k$ is a finite abelian extension.
The norm residue symbols, 
a natural injective homomorphism $\langle\,\rangle_\p: k_\p^*\to \mca{C}_k$, 
and the natural embedding $G(l_p/k_p) \to G(l/k)$ commute{\rm ;}  
\[ \xymatrix{
k_p^*\ \ar@{^{(}->}[d]_{\langle \ \rangle_\p} \ar@{->>}[r]^-{(\ ,\, l_p/k_p)} \ar@{}[dr]|\circlearrowleft & \ \Gal(l_p/k_p) \ar[d] \\
\mca{C}_k\ \ar@{->>}[r]_-{(\ ,\, l/k)} & \ \Gal(l/k).
} \]  

\end{prop}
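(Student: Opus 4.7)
The plan is to verify the commutativity by invoking the fundamental local-global compatibility built into idelic class field theory: the global norm residue symbol is by construction a product of local ones.

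First, I would recall that for any idele $(a_{\mf{q}})_{\mf{q}} \in \mca{J}_k$, the global symbol admits the expression
\[
((a_{\mf{q}})_{\mf{q}}, l/k) = \prod_{\mf{q}} (a_{\mf{q}}, l_{\mf{q}}/k_{\mf{q}}),
\]
where each local symbol $(a_{\mf{q}}, l_{\mf{q}}/k_{\mf{q}}) \in \Gal(l_{\mf{q}}/k_{\mf{q}})$ is regarded inside $\Gal(l/k)$ via the canonical embedding of the decomposition subgroup, and the product is finite because almost all components $a_{\mf{q}}$ are units and $l/k$ is unramified at all but finitely many places. That this descends to $\mca{C}_k$ is the content of Artin reciprocity.

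Next, I would specialize to $a \in k_\p^*$ and chase it through both routes. The idele $\langle a \rangle_\p$ has component $a$ at $\p$ and $1$ at every other place. Because the local symbol sends $1 \in k_{\mf{q}}^*$ to the identity of $\Gal(l_{\mf{q}}/k_{\mf{q}})$, every factor in the product formula except at $\p$ is trivial, whence
\[
(\langle a \rangle_\p, l/k) = (a, l_\p/k_\p)
\]
regarded inside $\Gal(l/k)$ via the decomposition group embedding. This is exactly the commutativity of the square, and injectivity of $\langle\,\rangle_\p$ is a separate standard fact from the strong approximation / the structure of $\mca{C}_k$.

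The main (and essentially only) obstacle is the product-formula description of $(\,,l/k)$, which is the nontrivial content of global class field theory; once granted, the diagrammatic compatibility is formal. In our topological setting the same argument will apply, with $\wt{\rho}:\mca{J}_{M,\mca{K}}\surj \mca{C}_{M,\mca{K}}$ and the local reciprocity maps on $H_1(\partial V_K)$ playing the role of the product formula, so this proposition serves as the template we will mimic in the next section.
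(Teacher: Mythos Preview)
The paper does not supply its own proof of this proposition; it is quoted from Neukirch as background material, so there is nothing to compare your argument against on the paper's side.

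That said, your argument deserves a remark on logical order. You invoke the product formula $((a_{\mf q})_{\mf q},l/k)=\prod_{\mf q}(a_{\mf q},l_{\mf q}/k_{\mf q})$ as your starting point and then specialise to the idele $\langle a\rangle_\p$ to read off the commutativity. In Neukirch's development (and in the paper's presentation), however, this product formula is exactly the \emph{next} statement, Corollary~5.7 (Proposition~\ref{prop.product} here), and it is deduced \emph{from} Proposition~5.6 rather than the other way round. So relative to the source you are citing, your argument is circular: you are assuming the corollary to prove the proposition. In Neukirch the commutativity is established directly from the abstract class-field-theory machinery (compatibility of the local and global reciprocity maps under the class formation axioms), and only afterwards is the product formula read off by writing an arbitrary idele as a product of single-place ideles. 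If instead you take the product decomposition as the \emph{definition} of the global symbol---as some treatments do---then your argument is correct and essentially tautological; just be clear about which logical setup you are working in, since the paper follows Neukirch's order.
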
 

\begin{prop} {\rm (cf.\cite[Chapter VI, Corollary 5.7]{Neukirch})} \label{prop.product} 
Let $l/k$ be a finite abelian extension and let $\alpha=(\alpha_\p)_\p \in \mca{J}_k$ be an element of the idele group. 
For each prime ideal $\p$ of $k$, put $l_\p=k_\p l$. 
Then the global/local norm residue symbols satisfy  
\[(\alpha, l/k)=\prod_\p (\alpha_\p, l_\p/k_\p).\]
If in addition $\alpha =\Delta(a)$ for some $a\in k^*$, that is, if $\alpha \in \mca{P}_k$, then $(\alpha, l/k)=1$. 
\end{prop}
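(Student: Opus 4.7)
\emph{Proof proposal.} The plan is to deduce the product formula from Proposition \ref{prop.normresidue} by factoring an arbitrary idele $\alpha$ into its local components, and to derive the vanishing on principal ideles directly from the construction of the global symbol.

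First, I would fix a finite set $S$ of primes of $k$ containing the archimedean places, the primes ramified in $l/k$, and every $\p$ with $\alpha_\p \notin \mca{O}_\p^\times$. Local class field theory gives $(\mca{O}_\p^\times, l_\p/k_\p)=1$ whenever $l_\p/k_\p$ is unramified, so $(\alpha_\p, l_\p/k_\p)=1$ for every $\p \notin S$; hence the right-hand side is really a finite product over $S$.

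Next, in $\mca{J}_k$ I would factor $\alpha = \alpha_S \cdot u$, where $\alpha_S$ is the idele that agrees with $\alpha$ at primes in $S$ and equals $1$ elsewhere, and $u$ is the complementary unit idele (trivial on $S$ and equal to $\alpha_\p \in \mca{O}_\p^\times$ off $S$). Multiplicativity of $(\,,l/k)$ together with Proposition \ref{prop.normresidue} applied at each $\p \in S$ yields
\[
(\alpha_S, l/k) = \prod_{\p \in S}(\langle \alpha_\p \rangle_\p, l/k) = \prod_{\p \in S}(\alpha_\p, l_\p/k_\p).
\]
To handle $u$ I would approximate it in the idele topology by truncations $u^{(T)}$ of finite support for increasing finite sets $T \supset S$; each $u^{(T)}$ is a product of unit components at unramified primes, hence $(u^{(T)}, l/k)=1$ by the preceding observation and Proposition \ref{prop.normresidue}. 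Since $\Gal(l/k)$ is discrete, continuity of the global reciprocity map forces $(u, l/k)=1$, and consequently $(\alpha, l/k) = \prod_\p (\alpha_\p, l_\p/k_\p)$.

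The second assertion is then immediate from the construction: the global norm residue symbol is defined on the quotient $\mca{C}_k = \mca{J}_k/\mca{P}_k$, so any principal idele $\alpha = \Delta(a) \in \mca{P}_k$ is sent to the identity. Combined with the product formula this recovers the classical Hilbert reciprocity $\prod_\p (a, l_\p/k_\p) = 1$ for $a \in k^*$. The main obstacle is the passage from finite truncations to $u$, which rests on two standard inputs beyond Proposition \ref{prop.normresidue}: the triviality of local reciprocity on unramified units, and the continuity of the global reciprocity map with discrete target. Granted these, the argument is a formal consequence of Proposition \ref{prop.normresidue}.
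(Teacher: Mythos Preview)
The paper does not supply its own proof of Proposition~\ref{prop.product}; it is merely recalled from \cite[Chapter~VI, Corollary~5.7]{Neukirch} as background for the topological analogue, so there is no argument in the paper to compare against directly. Your proof is correct and is the standard one.

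If one instead compares with the paper's treatment of the topological analogue, Theorem~\ref{thm.product}, the strategies line up. For the product formula, the paper argues on topological generators $\langle a\rangle_{K'}$ (ideles supported at a single component) and applies the commutative square of the preceding proposition; your decomposition $\alpha=\alpha_S\cdot u$ together with Proposition~\ref{prop.normresidue} at each $\p\in S$ is the number-theoretic counterpart, with the additional continuity/approximation step handling the infinite unit tail that has no analogue in the $3$-manifold setting (where the meridian components at almost all $K$ contribute trivially by construction). For the vanishing on principal ideles, your argument and the paper's are identical in form: the global symbol factors through $\mca{C}_k=\mca{J}_k/\mca{P}_k$ (respectively $\mca{C}_{M,\mca{K}}=\mca{J}_{M,\mca{K}}/\mca{P}_{M,\mca{K}}$), so anything in $\mca{P}_k$ maps to the identity.
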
 

We remark that the assumption ``abelian'' on $l/k$ and the groups ${\rm Gal}(l/k)$ and ${\rm Gal}(l_\p/k_\p)$ in above may be replaced by ``Galois'', ${\rm Gal}(l/k)^{\rm ab}$, and ${\rm Gal}(l_\p/k_\p)^{\rm ab}$ by a slight additional argument. 

\subsection{The Hilbert reciprocity} \label{sec.Hilbert} 
Let $n \in \Z_{>1}$ and let $k$ be a number field containing primitive $n$-th roots of unity. 
Let $\mu_n$ denote the set of $n$-th roots of unity in $k$. 
Then for each $b \in k^\ast :=\{x\in k\mid x\neq 0\}$, 
the extension $k(\sqrt[n]b)/k$ obtained by adding a primitive $n$-th root $\sqrt[n]b$ of $b$ is independent of the choice of $\sqrt[n]b$, and is called the Kummer extension. Here, $k$ may be replaced by a local field $k_\p$, $\p$ being a non-zero prime ideal of $\mca{O}_k$. 

The $n$-th Hilbert symbol 
\[ \left( \dfrac{\bullet,\bullet}{\p} \right) :k_\p^*\times k_\p^*\to \mu_n \cong \Z/n\Z \]
is defined by using the norm residue symbol as 
\[(a,k_\p(\sqrt[n]{b})/k_\p)\sqrt[n]{b}=\left( \dfrac{a,b}{\p}\right)\sqrt[n]{b}.\]
The product formula of the norm residue symbol yields the following. 

\begin{prop}[{\cite[Chapter VI, Theorem 8.1]{Neukirch}}] \label{prop.Hilbert}
For each $a,b \in k^*$, \[\prod_\p \left( \dfrac{a,b}{\p}\right) =1.\]
\end{prop}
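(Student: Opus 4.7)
The plan is to deduce the Hilbert reciprocity from the product formula of norm residue symbols (Proposition~\ref{prop.product}) applied to the Kummer extension $l = k(\sqrt[n]{b})/k$, which is abelian because $k \supset \mu_n$. Since $\mu_n \subset k$, each local completion $l_\p = k_\p(\sqrt[n]{b})$ is the Kummer extension over $k_\p$, so the local Hilbert symbol at $\p$ is well-defined by the formula
\[
(a,\, l_\p/k_\p)\sqrt[n]{b} \;=\; \left(\dfrac{a,b}{\p}\right)\sqrt[n]{b}.
\]

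First, I would form the principal idele $\alpha := \Delta(a) \in \mca{P}_k \subset \mca{J}_k$, whose $\p$-component is $a$ for every $\p$. By Proposition~\ref{prop.product} applied to $\alpha$ and the abelian extension $l/k$, the global norm residue symbol $(\alpha, l/k) \in \Gal(l/k)$ equals the product $\prod_\p (a,\, l_\p/k_\p)$, and since $\alpha \in \mca{P}_k$ the left-hand side is trivial:
\[
1 \;=\; (\Delta(a),\, l/k) \;=\; \prod_\p (a,\, l_\p/k_\p) \quad \text{in } \Gal(l/k).
\]
(Note that all but finitely many factors are trivial because $\alpha \in \mca{J}_k$ has $v_\p(a)=0$ for almost all $\p$ and the extension $l/k$ is unramified outside a finite set, so the convergence of the product is automatic.)

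Next, I would evaluate both sides on $\sqrt[n]{b}$: by the defining formula of the Hilbert symbol, the right-hand side sends $\sqrt[n]{b}$ to $\bigl(\prod_\p \left(\tfrac{a,b}{\p}\right)\bigr)\sqrt[n]{b}$, while the left-hand side fixes $\sqrt[n]{b}$. Since $\mu_n$ acts freely on $\sqrt[n]{b}$ in $l^\times$, we conclude $\prod_\p \left(\tfrac{a,b}{\p}\right) = 1$ in $\mu_n$.

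The main obstacle, if any, is purely bookkeeping: making sure Proposition~\ref{prop.product} really applies (i.e.\ that $l/k$ is abelian, which uses $\mu_n \subset k$, and that almost all factors are trivial, which uses the idele-theoretic convergence of the product). Once these are noted, the proof is a direct specialization of the product formula.
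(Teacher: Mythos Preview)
Your proposal is correct and is exactly the approach the paper indicates: the paper does not spell out a proof of Proposition~\ref{prop.Hilbert} but simply says ``The product formula of the norm residue symbol yields the following,'' i.e.\ it defers to Proposition~\ref{prop.product} applied to the Kummer extension $k(\sqrt[n]{b})/k$ and the principal idele $\Delta(a)$, which is precisely what you wrote out in detail.
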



\section{Meridians and longitudes}
We define the meridian and longitude of a knot in a $\Q$HS$^3$ $M$. 

Let $K$ be a knot in a $\Q$HS$^3$ $M$. 
Let $[\Sigma_K]$ be a generator of $H_2(M-{\rm Int}V_K,\partial V_K)$ whose image under $H_2(M-{\rm Int}V_K,\partial V_K)\overset{\partial}{\To}H_1(\partial V_K)\surj H_1(V_K)$ coincides with $[K]$. 
The image of $\partial [\Sigma_K]$ in $H_1(\partial V_K)$ is called the preferred longitude of $K$ and is denoted by $\lambda=\lambda_K$. 

A generator of ${\rm Ker}(H_1(\partial V_K)\surj H_1(V_K))$ which is clockwise to $K$, supposing that $K$ sticks into the wall,
is called a meridian and is denoted by $\mu=\mu_K$.

\section{Intersection number}
Let us briefly recall the Poincar\'e--Lefschetz theory on intersections of homology cycles on manifolds (cf.\cite{Spanier1966book}). 
If $M$ is an $n$-dimensional manifold, then the intersection number $\iota_M=\cap_M: H_k(M,\Z)\times H_{n-k}(M,\Z)\to \Z$ is well-defined on the chain level by the number of intersection points of cycles with signs. 
This may be translated to the cohomology side via the Poincare duality; Define $\iota_M^*:H^k(M;\Z)\times H^{n-k}(M,\Z)\to \Z; (a,b)\mapsto \langle a\cup b,[M]\rangle$ by the cup product.
Let $(x^*,y^*)\in H^k(M;\Z)\times H^{n-k}(M,\Z)$ denote the dual of $(x,y)\in H_k(M,\Z)\times H_{n-k}(M,\Z)$. 
Then we have $\iota_M^*(x^*,y^*)=\iota_M(x,y)$. 
The form $\iota_M$ induces that on $H_*/{\rm (torsion)}$'s. 
We have skew symmetricity $\iota_M(x,y)=(-1)^{k(n-k)}(\iota_M(x,y))$. 

\section{Norm residue symbols} 
In this section, we point out analogues of Propositions \ref{prop.normresidue} and \ref{prop.product}. 

Let $M$ be a $\Q$HS$^3$ endowed with an admissible link $\mca{K}$. 
For a finite abelian cover $h:N\to M$ branched along a finite sublink of $\mca{K}$, 
we define \emph{the norm residue symbols} 
$(\ ,h):\mca{C}_{M,\mca{K}}\to {\rm Gal}(h)$
to be the composition of the quotient map and the global reciprocity map. 

For each knot $K\subset \mca{K}$, let $ \langle \ \rangle_K: H_1(\partial V_K)\inj \mca{J}_{M,\mca{K}}$ denote the natural injective homomorphism and let $ \langle \ \rangle_K: H_1(\partial V_K)\inj \mca{J}_{M,\mca{K}}\surj \mca{C}_{M,\mca{K}}$ also denote the composition with slight abuse of notation.  
Let $h_K=h|_{h^{-1}(\partial V_K)}$ denote the restriction map. Note that there is a natural map ${\rm Gal}(h_K)={\rm Gal}(h_K)\to {\rm Gal}(h)$. Define the local norm residue symbol in a similar way as $(\ , h_K):H_1(\partial V_K)\to {\rm Gal}(h_K)$. Then the compatibility of the local and global norm residue maps is stated as follows. 
\begin{prop} 
The following diagram commutes. 
\[ \xymatrix{
H_1(\partial V_K)\ar@{^{(}->}[d]_{\langle \ \rangle_K} \ar@{->>}[r]^{ (\ ,\, h_K)} \ar@{}[dr]|\circlearrowleft & \Gal(h_K) \ar[d] \\
\mca{C}_{M,\mca{K}} \ar@{->>}[r]_{(\ ,\, h)} & \Gal(h)}.
\]  
\end{prop}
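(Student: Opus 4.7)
The plan is to reduce the commutativity of the stated square to the compatibility of the global surjection $\wt{\rho}\colon \mca{J}_{M,\mca{K}}\surj H_1(M-\mca{K})$ with the family of local reciprocities, a property already recorded in \cite{NiiboUeki}. Both the global symbol $(\ ,h)$ and the local symbol $(\ ,h_K)$ are defined as compositions of a quotient map and a reciprocity isomorphism, so unwinding the definitions reduces the assertion to the commutativity of a larger prism sitting over $\wt{\rho}$.

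Concretely, I would first decompose $(\ ,h)$ as
\[ \mca{C}_{M,\mca{K}}\surj \mca{C}_{M,\mca{K}}/h_*(\mca{C}_{N,h^{-1}(\mca{K})}) \xrightarrow{\rho_h} \Gal(h)^{\rm ab}, \]
where $\rho_h$ is the isomorphism induced by $\wt{\rho}$ via the Hurewicz map $H_1(M-\mca{K})\surj \Gal(h)^{\rm ab}$ associated to the covering. Analogously, $(\ ,h_K)$ factors through the local analogue on the torus $\partial V_K$, i.e.\ through the natural surjection $H_1(\partial V_K)\to H_1(h^{-1}(\partial V_K)\to \partial V_K)$-\,Hurewicz/Galois quotient. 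The key geometric input is the commutative square
\[ \xymatrix{
H_1(\partial V_K) \ar@{^{(}->}[r]^-{\langle\ \rangle_K} \ar[d] & \mca{J}_{M,\mca{K}} \ar[d]^{\wt{\rho}} \\
H_1(\partial V_K) \ar[r] & H_1(M-\mca{K})
}\]
where the left arrow is the identity and the bottom arrow is induced by the inclusion $\partial V_K\inj M-\mca{K}$. This commutativity is built into the very definition of $\wt{\rho}$ on the $K$-th factor. Pushing this through the Hurewicz quotient to $\Gal(h)^{\rm ab}$, and observing that this quotient factors as $H_1(\partial V_K)\to \Gal(h_K)\to \Gal(h)$ by functoriality of the Hurewicz map applied to the inclusion $\partial V_K\inj M-\mca{K}$ lifted to the cover, yields exactly the required commutativity after modding out by norms.

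The main obstacle is verifying that the arrow $\Gal(h_K)\to \Gal(h)$ introduced in the statement agrees with the map induced by functoriality of the Hurewicz picture, i.e.\ that it sends a deck transformation of the restricted cover $h^{-1}(\partial V_K)\to \partial V_K$ to the deck transformation of $N\to M$ extending it. Since $h$ is Galois, every deck transformation on a connected component of $h^{-1}(\partial V_K)$ extends uniquely to one of $N$, and the resulting map $\Gal(h_K)\to \Gal(h)$ is visibly the image of the decomposition map; once this identification is pinned down, the compatibility with norm subgroups (so that the squares descend to the quotients defining the symbols) is a formal diagram chase using that $\Delta$ lands in $\Ker \wt{\rho}$, which was proved under the admissibility hypothesis alone in \cite[Theorem 5.3]{NiiboUeki}.
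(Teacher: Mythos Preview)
Your proposal is correct and aligns with the paper's treatment: the paper gives no explicit proof of this proposition, instead relying on the earlier remark that ``$\wt{\rho}$ is compatible with the maps of local reciprocities'' from \cite{NiiboUeki}, which is precisely the fact you unwind. One small caveat: you invoke the isomorphism $\rho_h$, which in the paper's setup requires $\mca{K}$ to be \emph{very} admissible, whereas this proposition is stated under mere admissibility; however your argument only actually uses the surjection $\mca{C}_{M,\mca{K}}\surj \Gal(h)$ induced by $\wt{\rho}$ and the Hurewicz quotient, so the dependence on $\rho_h$ is cosmetic and the proof goes through as written.
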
 

The idele group $J_{M,\mca{K}}$ is topologically generated by the set $\bigcup_K {\rm Im}(\langle \ \rangle_K: H_1(\partial V_K)\inj \mca{J}_{M,\mca{K}})$, that is, the set of elements presented as $\alpha=\langle a \rangle_K$ for some $K$ and $a\in H_1(\partial V_K)$. 
If we fix a knot $K'\subset \mca{K}$, then each topological generator $\alpha=(\alpha_K)_K=\langle a \rangle_{K'}$ with $a\in H_1(\partial V_{K'})$ satisfies 
$\alpha_K=
\left\{ \begin{array}{ll} 
0 & {\rm if} \ K\neq K'\\
a & {\rm if} \ K=K' 
 \end{array}\right. $ 
 and hence 
 $(\alpha,h)=(\langle a\rangle_{K'}, h)=(a, h_{K'}) =\prod_{K\subset \mca{K}} (\alpha_K, h_{K'}).$
 This equality yields the following. 
\begin{thm}[(The product formula)] \label{thm.product} 
Let $M$ be a $\Q$HS$^3$ endowed with an admissible link $\mca{K}$. 
Let $h:N\to M$ be a finite abelian cover branched along a finite sublink of $\mca{K}$ and 
let $\alpha\in \mca{J}_{M,\mca{K}}$. Then \[(\alpha, h)=\prod_{K\subset \mca{K}} (\alpha_K, h_K)\] holds. 
If in addition $\alpha\in \mca{P}_{M,\mca{K}}$, then $(\alpha, h)={\rm id}$ holds. 
\end{thm}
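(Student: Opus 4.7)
The plan is to lift the product formula from topological generators to the whole of $\mca{J}_{M,\mca{K}}$ by a density-and-continuity argument, and then to read off the vanishing on $\mca{P}_{M,\mca{K}}$ from the definition $\mca{C}_{M,\mca{K}} := \mca{J}_{M,\mca{K}}/\mca{P}_{M,\mca{K}}$. The generator case is already carried out in the paragraph preceding the statement: for $\alpha = \langle a\rangle_{K'}$ all local factors vanish except at $K'$, and the previous Proposition identifies $(\alpha,h)$ with the single local symbol $(a, h_{K'})$.

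The first technical step will be to show that the right-hand side is well-defined and continuous. Let $L_0$ denote the finite sublink along which $h$ ramifies. For any component $K \not\subset L_0$ the restricted cover $h_K$ is unramified, and one expects $\mu_K = \Ker v_K$ to sit inside $\Ker(\,\cdot\,, h_K)$, the topological counterpart of the classical fact that local units lie in the norm group of an unramified extension. Verifying this vanishing within the homological framework of \cite{NiiboUeki} is, I think, the principal obstacle, since the remaining steps are formal. Granted it, each $\alpha \in \mca{J}_{M,\mca{K}}$ has $\alpha_K \in \mu_K$ for almost every $K$, so $\phi(\alpha) := \prod_K(\alpha_K, h_K)$ involves only finitely many nontrivial factors; moreover the kernel of the homomorphism $\phi$ contains the basic open neighbourhood $\{0\}_{L_0} \times \prod_{K \not\subset L_0}\mu_K$ of $0$, so $\phi$ is continuous into the discrete group $\Gal(h)$.

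Next I would compare $\phi$ with the global symbol $\psi(\alpha) := (\alpha, h)$, which is also a continuous homomorphism $\mca{J}_{M,\mca{K}} \to \Gal(h)$. By the generator case and multiplicativity, $\phi$ and $\psi$ agree on the subgroup $\bigoplus_K H_1(\partial V_K) \subset \mca{J}_{M,\mca{K}}$ of finitely supported ideles. This subgroup is dense: given $\alpha \in \mca{J}_{M,\mca{K}}$, setting $\alpha_K := 0$ for $K$ outside a sufficiently large finite sublink $S$---any $S$ containing the finite set $\{K : v_K(\alpha_K) \neq 0\}$ suffices---produces a finitely supported approximant differing from $\alpha$ by an element of $\{0\}_S \times \prod_{K \not\subset S}\mu_K$. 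Two continuous homomorphisms to a discrete target that coincide on a dense subgroup must be equal, yielding the first assertion.

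For the second assertion, the global symbol $(\,\cdot\,, h)$ is by construction the composition $\mca{J}_{M,\mca{K}} \surj \mca{C}_{M,\mca{K}} \to \Gal(h)$, and $\mca{P}_{M,\mca{K}} = \Im\Delta$ is by definition the kernel of the quotient. Hence every principal idele is sent to ${\rm id}$.
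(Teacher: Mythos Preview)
Your proof is correct and follows essentially the same approach as the paper: the second assertion is proved identically (via $\mca{P}_{M,\mca{K}}=\Ker(\mca{J}_{M,\mca{K}}\surj\mca{C}_{M,\mca{K}})$), and for the first the paper simply asserts that the generator case ``yields'' the theorem since the $\langle\ \rangle_K$ topologically generate $\mca{J}_{M,\mca{K}}$, whereas you spell out the density/continuity argument and the finiteness of the product that the paper leaves implicit. The point you flag as the principal obstacle---that $\mu_K\subset\Ker(\,\cdot\,,h_K)$ for $K$ outside the branch locus---is indeed needed but is the routine topological analogue of units being local norms in an unramified extension (the meridian lifts under an unramified $h_K$), and the paper does not pause over it either.
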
 

\begin{proof} We prove the latter half of the assertion. 
Recall that the group $\mca{P}_{M,\mca{K}}$ is defined as the image of f the natural map 
$\ds \Delta=\prod_K {\rm pr}_K\circ \partial:H_2(M,\mca{K})=\varinjlim_{L\subset \mca{K}} H_2(M,L) \to \mca{J}_{M,\mca{K}}$
and the equality $\mca{P}_{M,\mca{K}}={\rm Ker}(\mca{J}_{M,\mca{K}}\surj \mca{C}_{M,\mca{K}})$ holds. 
Hence if $\alpha \in \mca{P}_{M,\mca{K}}$, then by $\alpha\in {\rm Ker}(\mca{J}_{M,\mca{K}}\to \mca{C}_{M,\mca{K}}\surj \Gal(h))$, and hence $(\alpha,h)={\rm id}$. 
\end{proof} 

We remark that the assumption ``abelian'' on $h:N\to M$ and the group ${\rm Gal}(h)$ in above may be replaced by 
``Galois'' and ${\rm Gal}(h)^{\rm ab}$. 
Even in this case, we have ${\rm Gal}(h_K)={\rm Gal}(h_K)^{\rm ab}$. 

\section{The Hilbert reciprocity} 
In this section, we regard the intersection number on a torus as an analogue of the Hilbert symbol
to formulate an analogue of the Hilbert reciprocity law and attach remarks. 

Let $M$ be a $\Q$HS$^3$ endowed with a link $\mca{K}$ with countably infinite tame components. 
For each knot $K \subset \mca{K}$, 
the pair $(\mu_K,\lambda_K)$ of the meridian and the longitude forms a basis of $H_1(\partial V_K)$, 
so each element may be written as $x\mu_K+y\lambda_K=\spmx{x\\y} \in H_1(\partial V_K)$, $x,y \in \Z$. 
The intersection number of $\spmx{x_1\\ y_1}$, $\spmx{x_2\\ y_2}$ $\in H_1(\partial V_K)$ is given by 
\[ \iota_K(\spmx{x_1\\y_1},\spmx{x_2\\ y_2})={\rm det}\spmx{x_1&x_2\\y_1&y_2}=x_1y_2-x_2y_1.\]
If representing cycles $c_1$ and $c_2$ of $\spmx{x_1\\ y_1}$ and $\spmx{x_2\\ y_2}$ intersect transversely, 
then $\iota_K(\spmx{x_1\\y_1},\spmx{x_2\\ y_2})=\iota_K(c_1,c_2)$ holds. 

\begin{thm}[(The Hilbert reciprocity law)] \label{thm} 
Let $M$ be a $\Q$HS$^3$ endowed with a link $\mca{K}$ with countably infinite tame components and suppose that $K$ runs through knots in $\mca{K}$. 

{\rm i)} If $a,b \in \mca{J}_{M,\mca{K}}$, then $\iota(a,b):=\sum_K \iota_K ({\rm pr}_K(a), {\rm pr}_K(b))$ is a well-defined finite sum.

{\rm ii)} If $a,b \in \mca{P}_{M,\mca{K}}$, then $\iota(a,b)=0$ holds. 

\end{thm}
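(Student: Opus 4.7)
The plan is to dispatch (i) by a bookkeeping argument using the restricted-product structure and to prove (ii) by reducing to a finite sublink and invoking the isotropy property of the boundary map in a compact 3-manifold with torus boundary.

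For (i), since $\mca{J}_{M,\mca{K}}$ is defined as the restricted product with respect to $\{\mu_K = \ker v_K\}_{K \subset \mca{K}}$, for any $a,b \in \mca{J}_{M,\mca{K}}$ there are only finitely many $K$ with $\mathrm{pr}_K(a) \notin \mu_K$ or $\mathrm{pr}_K(b) \notin \mu_K$. For all other $K$, both projections are integer multiples of the meridian, i.e.\ of the form $\spmx{x\\0}$ in the $(\mu_K,\lambda_K)$-basis, so the displayed determinant formula gives $\iota_K(\mathrm{pr}_K(a), \mathrm{pr}_K(b)) = 0$. Hence $\iota(a,b)$ is an honest finite sum.

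For (ii), I would write $a = \Delta(\alpha)$ and $b = \Delta(\beta)$ with $\alpha, \beta \in H_2(M,\mca{K}) = \varinjlim_L H_2(M,L)$, and pick a finite sublink $L \subset \mca{K}$ through which both classes factor. Let $W = M \setminus \mathrm{Int}\, V_L$, a compact oriented 3-manifold with boundary $\partial W = \bigsqcup_{K \subset L} \partial V_K$. By excision, $\alpha, \beta$ correspond to classes in $H_2(W, \partial W)$, and unravelling the definition of $\Delta$ identifies the $(\partial V_K)$-component of their boundary in $H_1(\partial W)$ with $\mathrm{pr}_K(a)$ and $\mathrm{pr}_K(b)$ respectively (both are zero for $K \not\subset L$). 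Because the intersection pairing on $\partial W$ splits as a direct sum over its torus components, the sum $\sum_K \iota_K(\mathrm{pr}_K(a), \mathrm{pr}_K(b))$ equals the single pairing $\iota_{\partial W}(\partial \alpha, \partial \beta)$.

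It then suffices to show that the image of $\partial : H_2(W, \partial W) \to H_1(\partial W)$ is isotropic with respect to this pairing, which is the standard ``Lagrangian boundary'' property of oriented 3-manifolds. Geometrically, after choosing transverse relative 2-cycle representatives $S_\alpha, S_\beta$ in $W$, the transverse intersection $S_\alpha \cap S_\beta$ is a properly embedded 1-chain in $W$ whose boundary 0-chain lies on $\partial W$ and equals, up to a uniform sign, the transverse intersection $\partial S_\alpha \cap \partial S_\beta$ on $\partial W$; being a boundary, it has total signed count zero, which gives the desired vanishing. The main technical obstacle is establishing the chain-level identity $\partial(S_\alpha \cap S_\beta) = \pm(\partial S_\alpha \cap \partial S_\beta)$ with consistent signs and transversality. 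This is classical Poincar\'e--Lefschetz intersection theory (as invoked in Section 5), but requires some orientation bookkeeping and a verification that no ``interior'' boundary points arise.
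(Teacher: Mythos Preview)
Your argument is correct and follows essentially the same route as the paper: part (i) is identical, and for (ii) both you and the paper reduce to a finite sublink $L$, pass to the compact exterior $W=M\setminus\mathrm{Int}\,V_L$, identify $\iota(a,b)$ with the intersection pairing on $\partial W$ applied to $(\partial\alpha,\partial\beta)$, and then invoke the isotropy of $\partial H_2(W,\partial W)\subset H_1(\partial W)$. The only difference is in how that isotropy is justified: you argue directly at the chain level via the $1$-chain $S_\alpha\cap S_\beta$ cobounding $\partial S_\alpha\cap\partial S_\beta$, whereas the paper packages the same fact into a commutative diagram of intersection pairings, using that the map $H_1(\partial V)\to H_1(W)$ kills $\partial' H_2(W,\partial W)$ so that the pairing factors through zero in the Poincar\'e--Lefschetz pairing $H_1(W)\times H_2(W,\partial W)\to\Z$. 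Both are standard presentations of the same Lagrangian boundary principle; the paper's diagrammatic version sidesteps the sign and transversality bookkeeping you flag, while your version is more self-contained.
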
 

The assertion ii) may be seen as an analogue of the Hilbert reciprocity law (Proposition \ref{prop.Hilbert}). 
In the proof, we use the product formula of norm residue symbols (Theorem \ref{thm.product}) and that the intersection number is defined on the level of cycles. 

\begin{proof} i) Since an element of $J_{M,\mca{K}}$ has only finite nontrivial longitudes, the explicit formula of the intersection number on a torus yields that $\sum_K \iota_K$ is a finite sum. 

ii) Let $a,b\in \mca{P}_{M,\mca{K}}$. 
Let $L$ denote the set of $K$'s such that the longitude of $a$ or $b$ is nontrivial and put $V=\cup_{K\subset L} V_K$. 
The boundary map decomposes as $\partial: H_2(M-{\rm Int} V, \partial V) \overset{\partial'}{\To}H_1(\partial V)\to H_1(M-{\rm Int}V)$ and we have $\partial' H_2(M-{\rm Int} V, \partial V) ={\rm Ker}(H_1(\partial V)\to H_1(M-{\rm Int}V))$. 
Noting that the intersection numbers rise to those on cycles, we see that the following diagram commutes. 
\[ \xymatrix{
\partial' H_2(M-{\rm Int}V, \partial V) \times \partial' H_2(M-{\rm Int}V, \partial V) \ar[r] & \Z \ar@{=}[d] \\ 
\partial' H_2(M-{\rm Int}V, \partial V) \times H_2(M-{\rm Int}V,\partial V) \ar@{^{(}-_{>}}[d]  \ar@{->>}[u] \ar[r] & \Z  \ar@{=}[d]\\ 
H_1(\partial V)  \times H_2(M-{\rm Int}V,\partial V) \ar[d] \ar[r] &\Z \ar@{=}[d]\\ 
H_1(M-{\rm Int} V) \times H_2(M-{\rm Int}V, \partial V) \ar[r] & \Z 
} \] 
The first line is the intersection number on $\partial V$. 
The other lines are the maps defined by the intersection numbers on cycles. 
Since the intersection form on the fourth line is non-degenerate, by the commutativity of this diagram and the exactness of the lower three lines, we see that the second line is indeed a zero map. Hence so is the first line and we have $\iota(a,b)=0$. 
\end{proof}

Note that since $\mca{P}_{M,\mca{K}}={\rm Ker}(\mca{J}_{M,\mca{K}}\surj \mca{C}_{M,\mca{K}})$, 
the assertion (2) induces a natural homomorphism $\iota:\mca{C}_{M,\mca{K}}\times \mca{P}_{M,\mca{K}}\to \Z$.

\begin{cor} Suppose in addition that $\mca{K}$ is admissible. 
For any $b \in \mca{P}_{M,\mca{K}}$ and $0\neq n\in \Z$, 
let $h_b$ denote the branched abelian cover corresponding to the kernel of the homomorphism 
$\iota(\bullet, b)\ {\rm mod}\ n: \mca{C}_{M,\mca{K}}\surj \Z/n\Z$. 
In addition, let $S \in H_2(M,\mca{K})$ with $\Delta S=b$ and let $\wt{S}$ denote the surface obtained by capping the meridian disks, that is, let $L$ denote the finite link consisting of all knots on which the longitude of $b$ is nontrivial and let $\wh{S}$ denote the inverse image of $S$ via the natural map $H_2(M,L)\inj H_2(M,\mca{K})$. 
Then, $h_b$ is a $\Z/n\Z$-cover branched along the link $\partial \wh{S}$. 
\end{cor}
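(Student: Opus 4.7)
The plan is to use the global reciprocity law to convert the surjection $\iota(\bullet,b)\bmod n:\mca{C}_{M,\mca{K}}\surj\Z/n\Z$ into a finite abelian branched cover, and then invoke the local/global compatibility of norm residue symbols to read off the branch locus.

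First, by Theorem~\ref{thm}~(ii), the homomorphism $\iota(\bullet,b)$ vanishes on $\mca{P}_{M,\mca{K}}$, hence descends to a well-defined map on $\mca{C}_{M,\mca{K}}$. Combined with the global reciprocity isomorphism of \cite{NiiboUeki} (which requires the admissibility of $\mca{K}$), the kernel of $\iota(\bullet,b)\bmod n$ cuts out a finite abelian branched cover $h_b:N_b\to M$ with $\Gal(h_b)\cong\Z/n\Z$.

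Next I would identify the branch locus. A knot $K\subset\mca{K}$ is branched in $h_b$ precisely when the image of the meridian $\mu_K$ under the local norm residue map $H_1(\partial V_K)\to\Gal(h_{b,K})\to\Gal(h_b)=\Z/n\Z$ is nontrivial. By the compatibility of local and global norm residue symbols established at the start of Section~6, this image is computed directly by the formula defining $h_b$. Decomposing $\mathrm{pr}_K(b)=x_K\mu_K+y_K\lambda_K$ in the basis $(\mu_K,\lambda_K)$ and using $\iota_K(\mu_K,\lambda_K)=1$, a one-line computation yields $\iota_K(\mu_K,\mathrm{pr}_K(b))=y_K$. Hence $K$ is branched iff $y_K\not\equiv 0\pmod n$, with local branching index $n/\gcd(y_K,n)$.

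Finally, I would match the collection $(y_K)_K$ with $\partial\wh{S}$. Since $b=\Delta S$ and $\Delta$ is assembled from the relative boundary maps $H_2(M,L')\to H_1(L')$ post-composed with the meridian/longitude decomposition on each $\partial V_K$, the longitude coefficient $y_K$ of $\mathrm{pr}_K(b)$ coincides with the coefficient of $[K]$ in $\partial\wh{S}\in H_1(L)=\bigoplus_{K\subset L}\Z[K]$ under the natural identification $\lambda_K\mapsto[K]$ given by $\partial V_K\inj V_K$. This realizes $h_b$ as the $\Z/n\Z$-branched cover of $M$ along $\partial\wh{S}$ with the prescribed branching data.

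The main obstacle I foresee is the last step: the careful bookkeeping needed to confirm that the longitude coefficients $y_K$ extracted from the idele $\Delta S$ really agree, with consistent sign conventions, with the coefficients read off from the relative $2$-chain $\wh{S}\in H_2(M,L)$, so that knots in $L$ with $y_K\equiv 0\pmod n$ are correctly dropped from the branch locus. Once this identification is in place, the local/global compatibility of norm residue symbols delivers the corollary without further computation.
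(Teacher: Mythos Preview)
Your argument is correct, but it takes a more local and computational route than the paper's. The paper argues in one line by passing to the exterior: writing $[a]$ for the image of $a$ under $\mca{J}_{M,\mca{K}}\surj H_1(\partial V)\to H_1(M-{\rm Int}V)$, it invokes the identity $\iota(a,b)=\iota_{M-{\rm Int}V}([a],S)={\rm lk}([a],\partial S)$, so that the homomorphism $\iota(\bullet,b)\bmod n$ on $\mca{C}_{M,\mca{K}}$ is literally the linking-number-with-$\partial S$ map mod~$n$, and the corollary follows immediately from the standard description of cyclic branched covers. In contrast, you compute $\iota_K(\mu_K,\mathrm{pr}_K(b))=y_K$ knot by knot via the local/global compatibility of norm residue symbols, and then separately match the $y_K$'s with the coefficients of $\partial\wh{S}$. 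Your method has the virtue of making the local branching indices $n/\gcd(y_K,n)$ explicit, but the ``obstacle'' you flag --- aligning the longitude coefficients of $\Delta S$ with the boundary of $\wh{S}$ under consistent sign conventions --- is exactly what the paper's global intersection/linking identity dissolves in a single stroke, since the equality $\iota(a,b)={\rm lk}([a],\partial S)$ holds for \emph{all} $a$, not just meridians.
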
 

\begin{proof} Let $V=\cup_{K\subset L} V_K$ as before and write $\mca{J}_{M,\mca{K}}\surj H_1(\partial V)\to H_1(M-{\rm Int}V); a\mapsto [a]$. Since the linking number is given by the intersection number, we obtain  
$\ds \iota(a,b)=\iota ([a], S) = {\rm lk}([a], \partial S)$, hence the assertion. 
\end{proof}
Thus, if we regard $\iota_K$ as an analogue of the Hilbert symbol, then an analogue of the Kummer extension is the $\Z/n\Z$-cover branched along a knot. This observation supports the argument of \cite[Appendix]{Hirano1-arXiv}. 

The assumption that $\mca{K}$ is admissible is used only to say the existence of the cover $h_b$. 
In this situation, in the commutative diagram in the proof of Theorem \ref{thm}, the map 
from the third line to the fourth line is surjective. 

We remark that if $M$ is not a $\Q$HS$^3$, then we do not necessarily have a Seifert surface of a knot, and hence we need to choose a longitude for each $K$ in $\mca{K}$. 
The existence of a (mod $n$) Seifert surface of the branch locus seems to correspond to the assumption that ``$k$ contains primitive $n$-th roots of unity'' in Section \ref{sec.Hilbert}.

The study of explicit formulas of the Hilbert symbol has a long history in number theory (cf. Kummer \cite{Kummer1858Hilbert}, Artin--Hasse \cite{ArtinHasse1928Hilbert}, Br\"uckner \cite{Bruckner1967}, Dainobu \cite{Dainobu2022Hilbert}, et.al.). 
%
We wonder if our Hilbert symbol is compatible with that for foliated dynamical systems due to Morishita et.al. in \cite{KimMorishitaNodaTerashima2021} and extends to various explicit formulas. 

\section*{Acknowledgments} 
We are grateful to Masanori Morishita for raising an interesting problem, 
Tomoki Mihara for fruitful discussion, 
the organizers of the conference ``Kyushu Algebraic Number Theory 2021 Spring --hybrid--'' for their great hospitality, 
and the anonymous referees of the journal for essential comments. 
The second author has been partially supported by JSPS KAKENHI Grant Number JP19K14538. 

\bibliographystyle{amsalpha}
\bibliography{ju.Hilbert.arXiv.bbl} 

\end{document}